\def\Nat{{\mathrm{ I\! N}}}
\tikzset{vertex/.style={shape=circle,very thin,draw,fill,inner sep=1pt,outer sep=0pt}}
\tikzset{tree/.style={shape=circle,very thin,draw,inner sep=4pt,outer sep=0pt}}
\newtheorem{theorem}{Theorem}
\theoremstyle{plain}
\newtheorem{corollary}{Corollary}
\newtheorem{lemma}{Lemma}
\theoremstyle{definition}
\numberwithin{equation}{section}
\begin{document}
\title{Asymptotic distribution of integers with certain prime factorizations}
\author[H. Vernaeve]{Hans Vernaeve}
\author[J. Vindas]{Jasson Vindas}
\author[A. Weiermann]{Andreas Weiermann}
\address{Department of Mathematics, Ghent University, Krijgslaan 281 Gebouw S22, B 9000 Gent, Belgium}
\email{hvernaev@cage.UGent.be}
\email{jvindas@cage.Ugent.be}
\email{weierman@cage.UGent.be}

\subjclass[2010]{Primary 05A17, 11P82. Secondary  05A16, 05C30}
\keywords{strong asymptotics for partition problems; Matula numbers; rooted trees; tree enumeration by prime factorization}

\begin{abstract}
Let $p_{1}<p_2<\dots <p_{\nu}<\cdots$ be the sequence of prime numbers and let $m$ be a positive integer. We give a strong asymptotic formula for the distribution of the set of integers having prime factorizations of the form $p_{m^{k_1}}p_{m^{k_{2}}} \cdots p_{m^{k_{n}}}$ with $k_{1}\le k_{2}\le \dots \le k_{n}$. Such integers originate in various combinatorial counting problems; when $m=2$, they arise as Matula numbers of certain rooted trees. 
\end{abstract}

\maketitle

\section{Introduction}

Let $\left\{p_\nu\right\}_{\nu=1}^{\infty}$ be the sequence of all prime numbers arranged in increasing order and let $m>1$ be a fixed positive integer. We shall consider the class of integers only admitting prime factors from the subsequence $\left\{p_{m^{k}}\right\}_{k=0}^{\infty}$, that is, the set
\begin{equation}
\label{Matulaeq1}
A_{m}=\left\{p_{m^{k_1}}p_{m^{k_{2}}} \cdots p_{m^{k_{n}}}\in\mathbb{N}:\ 0\leq k_{1}\le k_{2}\leq \cdots \le k_{n}\right\}\:.
\end{equation}
The aim of this article is to provide an asymptotic formula for the distribution of $A_{m}$, namely, the following counting function
\begin{equation*}
M_{2,m}(x)=\underset{n\in A_{m}}{\sum_{n\le x}} 1\: .
\end{equation*}

The function $M_{2,2}$ arises in various interesting combinatorial counting problems; particularly, in connection with rooted trees. In 1968 Matula gave an enumeration of (non-planar) rooted trees by prime factorization \cite{Matula1968}, the so-called Matula numbers. Number theoretic aspects of this rooted tree coding have been investigated in detail in \cite{b-t,g-ivic1996}. Such numbers may be used to deduce many intrinsic properties of rooted trees \cite{Deutsch,g-ivic1994,g-y}. The set $A_{2}$ in fact corresponds to a class of Matula numbers. In Section \ref{rooted trees} we review Matula coding of rooted trees and give the interpretation of $M_{2,2}$ as the counting function of rooted trees with height less or equal to 2, under Matula's enumeration. It is worth mentioning that the significance of Matula numbers comes from applications in organic chemistry, as they can be employed to develop efficient nomenclatures for representing molecules of a variety of organic compounds
(cf. \cite{elk1989,elk1990,elk1994,elk2011,g-ivic-elk1993}). As explained in Section \ref{rooted trees}, $M_{2,2}$ might also be regarded as a ``transfinite counting function'' for the ordinal $\omega^{\omega}$ in a certain complexity norm \cite{Weiermann2010}. 

In \cite{Weiermann2010} Weiermann found the weak asymptotics of the counting function $M_{2,2}$. Using a Tauberian theorem by Kohlbecker for partitions \cite{kohlbecker1958}, he showed that 

\begin{equation}
\label{Matulaeq3}
\log M_{2,2}(x)\sim \pi \sqrt{\frac{2\log x}{3\log 2}}\,.
\end{equation}

The asymptotic relation (\ref{Matulaeq3}) resembles the one obtained by Hardy and Ramanujan in 1917 for the celebrated
(unrestricted) partition function, 
\begin{equation}
\label{Matulaeq4}
\log p(n)\sim \pi \sqrt{\frac{2n}{3}}\:,
\end{equation}
which they \cite{hardy-ramanujan1918}, and independently Uspensky \cite{Uspensky1920}, greatly refined later to
\begin{equation}
\label{Matulaeq5}
p(n)\sim \frac{e^{\pi \sqrt{\frac{2n}{3}}}}{(4\sqrt{3})n}\: .
\end{equation}
Naturally, the transition from (\ref{Matulaeq4}) to (\ref{Matulaeq5}) consists in finding missing asymptotic terms. The problem we address here is of similar nature. We shall fill the gap between (\ref{Matulaeq3}) and the strong asymptotics by exhibiting hidden lower order terms in the approximation (\ref{Matulaeq3}), as stated in the following theorem.

\begin{theorem} \label{Matulath1}
The function $M_{2,m}$ has asymptotic behavior
\begin{equation}
\label{Matulaeq6}
M_{2,m}(x)\sim \frac{e^{K_m}\sqrt{3}\log m}{2 \pi^{2}\log 2}(\log x)^{\frac{\log\left(\frac\pi {\sqrt{6\log m}}\right)}{2\log m}}
\exp\left(\pi\sqrt{\frac{2\log x}{3\log m}} - \frac {(\log\log x)^{2}}{8\log m}\right)\: ,
\end{equation}
where 
$$K_m=\frac{1}{2\log m}\left((\log\log m)^2+\gamma^{2}-2\gamma\log\log m-\frac{\pi^{2}}{6}-\log^2\left(\frac\pi{\sqrt{6\log m}}\right)\right)- C_{2,m}\:,$$
$\gamma$ is the Euler-Mascheroni constant, and $C_{2,m}$ is given by the convergent series
$$
C_{2,m}=\sum_{k=1}^\infty \left(\log\log p_{m^k} - \log k - \log\log m - \frac{\log k}{k \log m} - \frac{\log\log m}{k \log m}\right)\:.
$$
\end{theorem}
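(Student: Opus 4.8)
The plan is to recast the count as a weighted partition problem and attack it by a Mellin transform together with a saddle-point evaluation. Writing $u=\log x$ and $\ell_k=\log p_{m^k}$, every element of $A_m$ corresponds to a multiplicity vector $(a_k)_{k\ge 0}$ of nonnegative integers with finite support, and $\log n=\sum_{k\ge 0}a_k\ell_k$; hence $M_{2,m}(x)=r(u):=\#\{(a_k):\sum_k a_k\ell_k\le u\}$. The associated Laplace--Stieltjes transform is the Euler product
$$F(s)=\sum_{n\in A_m}n^{-s}=\prod_{k=0}^{\infty}\frac{1}{1-p_{m^k}^{-s}}=\int_{0^-}^{\infty}e^{-su}\,dr(u),$$
so that $r(u)=\frac{1}{2\pi i}\int_{(c)}F(s)\,e^{su}\,\frac{ds}{s}$, and the whole problem reduces to (i) an asymptotic expansion of $\log F(s)$ as $s\to 0^+$ and (ii) a saddle-point evaluation of the inversion integral.

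For (i) I would start from
$$\log F(s)=\sum_{k\ge 0}\sum_{j\ge 1}\frac1j e^{-js\ell_k}=\frac{1}{2\pi i}\int_{(c)}\Gamma(w)\,\zeta(w+1)\,Z(w)\,s^{-w}\,dw,\qquad Z(w)=\sum_{k\ge 0}\ell_k^{-w},$$
and read off the expansion from the poles of the integrand as the contour is moved left. The pole of $Z$ at $w=1$, of residue $1/\log m$, gives the leading term $A\cdot s^{-1}$ with $A=\frac{\pi^2}{6\log m}$. The decisive point is the behavior at $w=0$: since $\ell_k\le T$ iff $m^k\le\pi(e^T)$, the prime number theorem yields $N(T):=\#\{k:\ell_k\le T\}=\frac{T-\log T}{\log m}+O(1)$, and the $-\frac{\log T}{\log m}$ term forces a \emph{simple pole} of $Z$ at $w=0$ of residue $\rho=-\frac1{\log m}$. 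Combined with the double pole of $\Gamma(w)\zeta(w+1)$ at $w=0$, this produces a triple pole whose residue contains $\frac{\rho}{2}(\log s)^2=-\frac{(\log s)^2}{2\log m}$. Collecting the residue at $w=0$ I expect
$$\log F(s)=\frac{\pi^2}{6\log m}\cdot\frac1s-\frac{(\log s)^2}{2\log m}-Z(0)\log s+c_*+o(1),$$
where the constant $c_*$ and the renormalized values $Z(0)$, $Z'(0)=-\sum_k\log\ell_k$ (regularized against the model $\log\ell_k\approx\log k+\log\log m+\frac{\log k+\log\log m}{k\log m}$) are precisely what encode $\gamma$, $\log\log m$, $\pi^2/6$ and the convergent series $C_{2,m}$ in $K_m$.

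For (ii), the real saddle $s_0=s_0(u)$ solves $\frac{d}{ds}\bigl(\log F(s)+su\bigr)=0$, giving $s_0=\frac{\pi}{\sqrt{6u\log m}}(1+o(1))$. Inserting $s_0$ into $e^{\,\log F(s_0)+s_0u}$ and the Gaussian factor $\bigl(s_0\sqrt{2\pi(\log F)''(s_0)}\bigr)^{-1}$ with $(\log F)''(s_0)\sim\frac{2A}{s_0^3}$, the $s^{-1}$-term produces $e^{\pi\sqrt{2u/(3\log m)}}$; the term $-\frac{(\log s_0)^2}{2\log m}$ yields $\exp\!\bigl(-\frac{(\log u)^2}{8\log m}\bigr)$ together with a cross term $\frac{\log A}{4\log m}\log u$ that builds (part of) the power of $u$; and $-Z(0)\log s_0$, the prefactor, and $c_*$ assemble the remaining power of $\log x$ and the constant $\frac{e^{K_m}\sqrt3\log m}{2\pi^2\log 2}$. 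Substituting $u=\log x$ gives (\ref{Matulaeq6}).

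The hard part will be twofold. First, the precise meromorphic continuation of $Z(w)=\sum_k(\log p_{m^k})^{-w}$ through $w=0$: one must isolate the smooth part of $N(T)$ via an effective prime number theorem (so that its error term creates no pole to the right of $w=0$ and is harmless there), and then evaluate $Z(0)$ and the renormalized $Z'(0)$ accurately enough to produce every explicit constant in $K_m$ --- in particular to watch the convergent series $C_{2,m}$ emerge and to verify that the several $\log u$-contributions combine into exactly the stated power $\frac{\log(\pi/\sqrt{6\log m})}{2\log m}$ of $\log x$. Second, the saddle-point estimate must be made rigorous: one needs a bound on $|F(c+it)|$ for $t$ bounded away from $0$ (a Meinardus-type minor-arc estimate, here simplified by the absence of periodicity since the $\ell_k$ are generic reals) so that the inversion integral concentrates at $s_0$, together with uniform control of the error in the expansion of $\log F$ across the saddle. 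Everything else is bookkeeping of constants.
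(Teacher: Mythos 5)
Your proposal reaches the same translation into a partition problem over $\lambda_k=\log p_{m^k}$ and would, if completed, reproduce all the constants correctly (your pole data for $Z(w)$ at $w=1$ and $w=0$, the triple pole producing $-\tfrac{1}{2\log m}\log^2 s$, and the cross term $\tfrac{\log A}{4\log m}\log u$ all match the paper's $A=1/\log m$, $B=-1/(2\log m)$, $C=1/2$ and the exponent of $\log x$). But the route is genuinely different. The paper does not perform a contour/saddle-point inversion at all: it first proves a small extension of Ingham's Tauberian theorem for partitions (allowing a $B\log^2 u$ term in $\int_0^u N(t)t^{-1}\,dt$), whose proof computes the $\sigma\to0^+$ expansion of $\log F(\sigma)=\sigma\int_0^\infty N(u)(e^{\sigma u}-1)^{-1}du$ by testing a distributional asymptotic expansion of $N$ against $u/(e^u-1)$ and evaluating the resulting Hadamard finite-part integrals from the Laurent expansion of $\Gamma(s+1)\zeta(s+1)$ at $s=0$; it then invokes Ingham's one-sided Tauberian theorem, which converts the real-axis asymptotics of $F(\sigma)$ directly into asymptotics of the monotone function $P(u)$. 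The arithmetic input (Cipolla's expansion of $p_n$, Stirling, the definitions of $\gamma$ and $\gamma_1$, and the emergence of $C_{2,m}$) is then fed in through an elementary computation of $\int_0^u N(t)t^{-1}\,dt$, playing the role of your regularized $Z(0)$, $Z'(0)$.

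The one place where your plan is more than bookkeeping is the minor-arc estimate. Your remark that it is ``simplified by the absence of periodicity'' understates the issue: to make the saddle-point inversion rigorous you must prove a quantitative lower bound on $\sum_k e^{-\sigma\lambda_k}(1-\cos(t\lambda_k))$ uniformly for $t$ away from the saddle, i.e.\ an equidistribution-type statement for $t\log p_{m^k}$, and nothing in your outline supplies it. This is precisely the step that Ingham's Tauberian theorem (and hence the paper) avoids entirely, at the cost of only obtaining the summatory function $P(u)$ rather than $p(r)$ itself --- which is all that is needed here. If you intend to keep the direct saddle-point route, that estimate is the gap you must close; otherwise, replacing step (ii) by Ingham's Theorem $1'$ turns your step (i) into an acceptable alternative to the paper's distributional computation.
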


We will provide a proof of Theorem \ref{Matulath1} in Section \ref{proof}. The proof is based on Ingham's method from \cite{Ingham}; however, it turns out that Ingham's original theorem for partitions \cite[Thm. 2]{Ingham} is not directly applicable to our context. In Section \ref{Ingham theorem}, we shall slightly extend his result. It is likely that such an extension of Ingham's theorem might be useful for treating partition problems other than the one dealt with in this article.

\section{Two counting problems and $M_{2,m}$}
\label{rooted trees}
\subsection{Rooted trees}
\label{Matula Numbers}
Matula's coding of (non-planar) rooted trees in terms of prime factorizations provides a bijection between such trees and the positive integers. The same rooted tree enumeration was rediscovered by G\"{o}bel in \cite{gobel1980}. It is defined as follows. If we denote the trivial one-vertex tree by $\bullet$, then its Matula number is $n(\bullet):=1$. Inductively, if $T_1$, $T_2$, \dots, $T_l$ are trees and $T$ is given as
\[
\begin{tikzpicture}[thick,auto,baseline]
	\coordinate[vertex] (r) at (0,0);
	\coordinate[tree,label=left:$T_1$] (n1) at (-1,1);
	\coordinate[tree,label=left:$T_2$] (n2) at (-.5,1.5);
	\coordinate[tree,label=right:$T_{l-1}$] (n3) at (.5,1.5);
	\coordinate[tree,label=right:$T_{l}$] (n4) at (1,1);
	\coordinate[label=right:$\dots$] (dots) at (-.4,1.5);
	\draw (r) to (n1);
	\draw (r) to (n2);
	\draw (r) to (n3);
	\draw (r) to (n4);
\end{tikzpicture}
\]
then its Matula number is defined as $n(T) := p_{n(T_1)}\cdots p_{n(T_l)}$.

If $T_{1,k}$ is the tree of height one with $k$ nodes above the root, then $n(T_{1,k})= p_1^k = 2^k$. If $T$ has height two, then 
$$
n(T) = p_{n(T_{1,k_1})}p_{n(T_{1,k_2})} \cdots p_{n(T_{1,k_{\nu-1}})} p_{n(T_{1,k_\nu})} = p_{2^{k_1}}p_{2^{k_{2}}}\cdots p_{2^{k_{\nu-1}}} p_{2^{k_\nu}}\:,$$ where the $j$-th node connected to the root carries a tree $T_{1,k_j}$.
\[
\begin{tikzpicture}[thick,auto,baseline]
	\coordinate[vertex] (r) at (0,0);
	\coordinate[vertex,label=left:$T_{1,k_1}$] (n1) at (-1.2,.7);
	\coordinate[vertex,label=left:$T_{1,k_2}$] (n2) at (-.5,1.5);
	\coordinate[vertex] (n21) at (-.5,2);
	\coordinate[vertex,label=right:$T_{1,k_{\nu-1}}$] (n3) at (.5,1.5);
	\coordinate[vertex] (n31) at (.25,1.9);
	\coordinate[vertex] (n32) at (.5,2);
	\coordinate[vertex] (n33) at (.8,1.9);
	\coordinate[vertex,label=right:$T_{1,k_\nu}$] (n4) at (1.2,.7);
	\coordinate[vertex] (n41) at (.95,1.15);
	\coordinate[vertex] (n42) at (1.2,1.2);
	\coordinate[vertex] (n43) at (1.45,1.15);	
	\coordinate[label=right:$\dots$] (dots) at (-.4,1.5);
	\draw (r) to (n1);
	\draw (r) to (n2);
	\draw (r) to (n3);
	\draw (r) to (n4);
	\draw (n2) to (n21);
	\draw (n3) to (n31);
	\draw (n3) to (n32);
	\draw (n3) to (n33);
	\draw (n4) to (n41);
	\draw (n4) to (n42);
	\draw (n4) to (n43);	
\end{tikzpicture}
\]
It is then clear that Matula coding gives a bijection between the set of rooted trees with height equal to 1 or 2 and the set $A_{2}$  defined in (\ref{Matulaeq1}). Consequently, $M_{2,2}(x)$ counts the number of Matula numbers corresponding to trees with $0<\text{height}(T)\le 2$, that are below $x$, i.e.,

\[\underset{\text{height}(T)\le 2}{\sum_{n(T)\le x}1}=M_{2,2}(x)+1\:.\]
Thus, this rooted tree counting function has also asymptotics (\ref{Matulaeq6}).


\subsection{Ordinal counting functions}
\label{ordinals} It might seem surprising at first sight that the counting function $M_{2,2}$ is related to studying asymptotic properties of 
transfinite ordinals. Since transfinite ordinals rarely show up in a number-theoretic context we will explain some features of this connection in informal
and general terms. The rest of the paper will not depend on the exposition given in this subsection, but it might be useful as a source of inspiration for further
study. 

In naive set theory ordinals generalize the ordering  of the natural numbers $0<1<2<\cdots$ by continuing beyond the first limit point $\omega$
like $0<1<2<\cdots<\omega+1<\omega+2<\cdots$. This process can be continued beyond the next limit $\omega+\omega$ like 
$0<1<2<\cdots<\omega+1<\omega+2<\cdots< \omega\cdot 2 +1<\omega\cdot 2 +2<\cdots$ and by iteration like
$0<1<2<\cdots<\omega+1<\omega+2<\cdots< \omega\cdot 2 +1<\omega\cdot 2+2 <\cdots<\omega\cdot 3<\cdots<\omega\cdot n<\cdots$.
At a certain moment we reach the first limit of limits $\omega\cdot \omega$ and, again by iteration, we reach limits of limits of limits and in the limit of this counting
we reach $\omega^\omega$ (an ordinal which -- as will become clear soon -- is of relevance to $M_{2,m}$).

The ordinal $\omega^\omega$ is not at all frightening since it appears as the order type of the polynomials in $\Nat[x]$ under eventual domination
or as the order type of the multisets of natural numbers. There is of course no bound in counting through the ordinals and by further counting we reach
$\omega^{\omega^\omega}, \omega^{\omega^{\omega^\omega}},\ldots$, but the higher we go the more complicated the description becomes.
Slight extensions could still be dealt with by combinatorial means (which can still be formalized in Peano arithmetic) and stronger extensions will require
from some moment onwards basic set theoretic machinery.

There is still some nice and accessible visualization of the ordinals less than $\varepsilon_0$, which is the limit of the finite powers of $\omega$
showing up in the sequence $\omega,\omega^\omega,\omega^{\omega^\omega}, \omega^{\omega^{\omega^\omega}},\ldots$. For this we consider a subclass
of Hardy's orders of infinity. Let ${\mathcal E}$ be the class of unary functions $f:\Nat\to\Nat$ such that
\begin{enumerate}
\item the function $c_0$ is an element of ${\mathcal E}$ where $c_0(x)=x$ and
\item with two functions $f,g$ in ${\mathcal E}$ also the function $h$ is in ${\mathcal E}$, where $h(x)=x^{f(x)}+g(x)$.
\end{enumerate}
On ${\mathcal E}$ we define the ordering of eventual domination as usual by $f\prec g$ if and only if there exists a non-negative integer $k$ such that $f(x)<g(x)$ for all $x\geq k$.
The structure $\langle {\mathcal E},\prec \rangle $ is isomorphic with $\langle \{\alpha:\alpha<\varepsilon_0\},< \rangle $ and so we can identify both structures.
If we also write $id=c_{0}$ for the identity function on $\Nat$, then the isomorphism maps $\omega$ to $id$, $\omega^\omega$ to $id^{id}$, $\omega^{\omega^\omega}$ to $id^{id^{id}}$,
etc. The ordinal $\varepsilon_0$ is the proof-theoretic ordinal of first order Peano arithmetic $PA$. $PA$ proves (after an appropriate formalization of the context)
the scheme of transfinite induction for all strict initial segments of $\varepsilon_0$ but not the scheme of transfinite induction for the full segment up to $\varepsilon_0$.

For treating ${\mathcal E}$ in the context of arithmetic we need a specific (easily definable) coding of the elements of ${\mathcal E}$ into the natural numbers.
One of the standard devices for achieving this is provided by associating to the elements of ${\mathcal E}$ their canonical counterparts in the finite non-planar rooted
trees. Such a bijection $t$ can defined recursively as follows. First, $t(c_0):=\bullet$. Every $f$ can be written as $f=id^{g_1}+\cdots+id^{g_n}$, then let
$$t(f):=
\begin{tikzpicture}[thick,auto,baseline]
	\coordinate[vertex] (r) at (0,0);
	\coordinate[tree,label=left:$t(g_1)$] (n1) at (-1,1);
	\coordinate[tree,label=left:$t(g_2)$] (n2) at (-.5,1.5);
	\coordinate[tree,label=right:$t(g_{l-1})$] (n3) at (.5,1.5);
	\coordinate[tree,label=right:$t(g_{l})$] (n4) at (1,1);
	\coordinate[label=right:$\dots$] (dots) at (-.4,1.5);
	\draw (r) to (n1);
	\draw (r) to (n2);
	\draw (r) to (n3);
	\draw (r) to (n4);
\end{tikzpicture}
.$$

By this identification we can --  using Matula's coding -- canonically associate to $f\in {\mathcal E}$ its G\"odel number $\lceil f\rceil:=n(t(f))$.
This coding has been used explicitly by Troelstra and Schwichtenberg in \cite[p. 320, Def. 10.1.5]{Troelstra}.
In this context we arrive at the following interpretation
$$1+M_{2,2}(x)=\#\{f\in {\mathcal E}: f\prec id^{id}\wedge \lceil f\rceil\leq x\}\: .$$

For coding a larger segment of ordinals Sch\"utte \cite[Sec. V.8]{Schutte} used a related coding $Nr$ which
when restricted to ${\mathcal E}$ has the property that $1+M_{2,4}(x)$ is the number of ordinals $\alpha$ below $\omega^\omega$
such that $Nr(\alpha)\leq x$.

Until now, the study of ordinal counting functions has found applications to logical limit laws for ordinals and to phase transitions for G\"odel incompleteness
results (it seems very interesting and intriguing to find additional applications). A further discussion of phase transitions will be beyond the scope of this exposition,
but we want to include an intriguing example for a zero-one law (see \cite{burris} for an account on logical limit laws).
As usual, we use $\models$ for the satisfaction relation from model theory.
Let $\varphi$ be a sentence in the language of linear orders. 
Let $$\delta_\varphi:=\lim_{x\to\infty}\frac{\#\{f\in {\mathcal E}:  f\prec id^{id} \wedge \lceil f\rceil\leq x \wedge \langle \{g\in {\mathcal E}:g\prec f \},\prec \rangle \models \varphi \}}
{\#\{f\in {\mathcal E}:  f\prec id^{id} \wedge \lceil f\rceil\leq x\}}\: .$$
Then $\delta_\varphi$ exists and either $\delta_\varphi=1$ or $\delta_\varphi=0.$
A proof of this and similar results has been obtained in \cite{Weiermann2012} by an analysis of the asymptotic behavior of $M_{2,2}$ and related counting functions.

At the beginning of this subsection it has been indicated that ordinals might provide a source of inspiration for further research
and we will now indicate some possible options.
For $g\in {\mathcal E}$, let $c_g(x):={\#\{f\in {\mathcal E}:  f\prec g \wedge \lceil f\rceil\leq x\}}$. For various choices
of $g$ some preliminary results on weak asymptotics for $c_g$ have been obtained in \cite{Weiermann2010}.
Moreover, strong asymptotics for $c_{{id^k}}$ can be obtained by elementary means.

We believe that the methods of this paper will allow one to provide strong asymptotics for
$c_{id^{id^k}}$ for any given fixed $k$. A strong asymptotic formula for $c_{id^{id^{id}}}$ (which would resemble something like
multiplicative double partitions) seems however to require new methods. A general challenge would be then to provide a general theorem
on strong asymptotics for $c_g$ for any fixed $g$ and for analogous functions emerging from the Sch\"utte coding.

\section{An extension of Ingham's theorem for unrestricted partitions}
\label{Ingham theorem}
As mentioned in the Introduction, we need an extension of Ingham's theorem for strong asymptotics of partition functions. The extension will follow from a complex Tauberian theorem for large asymptotic behavior of the Laplace transform, also due to Ingham \cite[Thm. $1'$]{Ingham}.

Let $0<\lambda_{0}<\lambda_1<\dots<\lambda_k\to \infty$ be a sequence of real numbers and let
\[N(u)= \sum_{\lambda_k\le u}1\]
be its counting function. Consider the additive semigroup $\Lambda$ generated by $\left\{\lambda_k\right\}_{k=0}^{\infty}$, i.e.,
\[\Lambda = \{r\in\mathbb{R}: r=\sum_{k=0}^l n_k\lambda_k,\ n_k\in\mathbb{N}
\}\:.\]
For $r\in\Lambda$, the partition function $p(r)$ is defined as the number of ways of writing $r$ as $r=\sum_{k=0}^l n_k\lambda_k$.
We further set
\[P(u)= \underset{r\in\Lambda}{\sum_{r\le u}} p(r)\:.\]
The following theorem obtains the asymptotic behavior of $P(u)$ if one knows a certain average asymptotic behavior for $N(u)$. It slightly extends that of Ingham by allowing an extra term of the form $B\log^2 u$ in the asymptotic expansion (\ref{eqN}). As usual, $\zeta$ stands for the Riemann zeta function and $\Gamma$ for the Euler Gamma function.  The constant $\gamma_{1}$ denotes the Stieltjes constant, that is,
$$
\gamma_{1}=\lim_{n\to\infty} \sum_{k=1}^n \frac{\log k}k-\frac{\log^2 n}2\: .
$$ 
\begin{theorem}\label{main-partition-thm}
Suppose that
\begin{equation}
\label{eqN}
\int_0^u \frac{N(t)}t\,dt = \frac{A}{\alpha}u^\alpha + B\log^2 u + C \log u + D +o(1)\: ,
\end{equation}
with $\alpha,A>0$. Then
\begin{equation}
\label{Matulaeq3.2}
P(u) \sim \left(\frac{1-\beta}{2\pi}\right)^{\frac12} e^{D'} M^{-(C+\frac12)} u^{C-\beta C-\frac\beta2}\exp\left(\frac{Mu^\beta}{\beta}+ B\log^2\left(\frac{u^{1-\beta}}{M}\right)\right)\:,
\end{equation}
where
\[\beta=\frac\alpha{\alpha+1},\quad M=(A\alpha\Gamma(\alpha+1)\zeta(\alpha+1))^{\frac1{\alpha+1}},\quad D'= D+ \left(\frac{\pi^{2}}{6}-2\gamma_1 -\gamma^2\right)B.\]
\end{theorem}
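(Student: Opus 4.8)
The plan is to study the Laplace--Stieltjes transform of $P$, namely $F(s)=\int_0^\infty e^{-us}\,dP(u)=\prod_{k=0}^\infty(1-e^{-\lambda_k s})^{-1}$, and to feed its behavior as $s\to 0^+$ into Ingham's complex Tauberian theorem [Thm $1'$]. The first task is therefore to convert the hypothesis (\ref{eqN}) on $N_1(u):=\int_0^u N(t)/t\,dt$ into a precise asymptotic expansion of $\log F(s)=-\sum_k\log(1-e^{-\lambda_k s})$. Writing $\chi(x)=x/(e^x-1)$ and integrating by parts twice (the second time after inserting $N(t)=t\,N_1'(t)$), I would first reduce $\log F$ to a single integral against $N_1$, which after the change of variables $x=ts$ becomes
\[
\log F(s)=-\int_0^\infty N_1(x/s)\,\chi'(x)\,dx .
\]

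Substituting the expansion (\ref{eqN}) and using $\log(x/s)=\log x-\log s$ reduces everything to the moment integrals $J_k=\int_0^\infty(\log x)^k\chi'(x)\,dx$ for $k=0,1,2$, together with $\int_0^\infty x^\alpha\chi'(x)\,dx$. These are all read off from $H(w):=\int_0^\infty x^w\chi'(x)\,dx=-w\,\Gamma(w+1)\zeta(w+1)$: the value at $w=\alpha$ produces the leading coefficient $C_0=A\Gamma(\alpha+1)\zeta(\alpha+1)$, while $J_0,J_1,J_2$ come from the Laurent expansion $\Gamma(w+1)\zeta(w+1)=w^{-1}+\bigl(\tfrac{\pi^2}{12}-\gamma_1-\tfrac{\gamma^2}{2}\bigr)w+O(w^2)$ at $w=0$, giving $J_0=-1$, $J_1=0$, $J_2=-2\bigl(\tfrac{\pi^2}{12}-\gamma_1-\tfrac{\gamma^2}{2}\bigr)$. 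The constant $\tfrac{\pi^2}{12}-\gamma_1-\tfrac{\gamma^2}{2}$ is exactly what upgrades $D$ to $D'$, and collecting terms I expect to obtain
\[
\log F(s)=C_0 s^{-\alpha}+B\log^2 s-C\log s+D'+o(1),\qquad s\to 0^+ .
\]

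With this in hand, the second task is to apply Ingham's theorem [Thm $1'$] to $F$. Heuristically this is the saddle-point evaluation of $P(u)=\frac1{2\pi i}\int (F(s)/s)e^{us}\,ds=\frac1{2\pi i}\int e^{\Psi(s)}\,ds$ with $\Psi(s)=us+\log F(s)-\log s$: the saddle $s_0$ solves $u=\alpha C_0 s_0^{-\alpha-1}$, whence $s_0=Mu^{-1/(\alpha+1)}=Mu^{\beta-1}$, and $us_0+C_0 s_0^{-\alpha}$ collapses to $Mu^\beta/\beta$. Evaluating the remaining pieces at $s_0$---the term $B\log^2 s_0=B\log^2(u^{1-\beta}/M)$, the contribution $-(C+1)\log s_0$ arising from $\log F$ and from the factor $1/s$, the constant $e^{D'}$, and the Gaussian factor $(2\pi\,\Psi''(s_0))^{-1/2}$ with $\Psi''(s_0)=(\alpha+1)M^{-1}u^{(1-\beta)(\alpha+2)}$---should reproduce precisely the power $u^{C-\beta C-\beta/2}$, the constant $M^{-(C+1/2)}$, and the prefactor $\bigl(\tfrac{1-\beta}{2\pi}\bigr)^{1/2}$ of (\ref{Matulaeq3.2}). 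The role of Thm $1'$ is to make this saddle-point heuristic rigorous.

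The main obstacle will be the rigorous justification of the first task rather than the evaluation of the constants. On one hand, (\ref{eqN}) controls $N_1$ only up to $o(1)$, so replacing $N_1(x/s)$ by its asymptotic incurs an error $\int_0^\infty o(1)\,|\chi'(x)|\,dx$ that must be shown to vanish; the delicate region is $x$ near $0$, where the argument $x/s$ need not be large, and a genuine Tauberian/dominated-convergence argument is needed. On the other hand, invoking a complex Tauberian theorem forces me to establish the expansion of $\log F(s)$ not merely for real $s\to0^+$ but uniformly for complex $s$ in a sector $|\arg s|\le\theta$, together with the ``minor-arc'' smallness of $|F(s)|$ away from the positive real axis. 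These are exactly the hypotheses required by Thm $1'$, and verifying them from the integral representation above is where the real work lies.
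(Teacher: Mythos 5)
Your proposal follows essentially the same route as the paper: first establish the asymptotics of $\log F(\sigma)$ as $\sigma\to 0^{+}$ from (\ref{eqN}) --- the paper does this distributionally, testing the differentiated expansion against $u/(e^{u}-1)$ and evaluating the same finite-part constants from the Laurent expansion of $\Gamma(s+1)\zeta(s+1)$ at $s=0$, so your integration by parts against $\chi'$ and your moment integrals $J_{0},J_{1},J_{2}$ are exactly the same computation yielding the same $D'$ --- and then feed the result into Ingham's Theorem $1'$ with identical saddle-point bookkeeping ($\sigma(u)=Mu^{\beta-1}$, etc.). One clarification on your last paragraph: Ingham's Theorem $1'$ is a real Tauberian theorem whose Tauberian condition is the monotonicity of $P$, so the uniform complex-sector expansion of $\log F$ and the minor-arc bounds you anticipate are not actually required; only the real-axis asymptotics (plus the routine splitting of the integral near $x=0$ to control the $o(1)$ term, which the paper's distributional framework packages automatically) need to be justified.
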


In order to deduce Theorem \ref{main-partition-thm} from Ingham's Tauberian theorem, we proceed to find the asymptotic behavior of the Laplace-Stieltjes transform of $P$. Set
$$
F(s)=\sum_{r\in \Lambda}e^{-sr}=\int_{0}^{\infty}e^{-su}dP(u)
$$
and 
$$f(s)=s\int_{0}^{\infty}\frac{N(u)}{e^{su}-1}\:du\:.$$
The generating function identity $F(s)=e^{f(s)}$ is well-known.
\begin{lemma}
\label{Ingham-extl1}
If $(\ref{eqN})$ holds, then, as $\sigma\to 0^{+}$,
$$
\sigma\int_0^\infty \frac{N(u)}{e^{\sigma u}-1}\,du = \frac{A\Gamma(\alpha+1)\zeta(\alpha+1)}{\sigma^\alpha} + B\log^2 \sigma - C\log \sigma + D' + o(1)\: .
$$
Here 
\begin{equation}
\label{Matulaeq3.3}
D'= D+ \left(\frac{\pi^{2}}{6}-2\gamma_1 -\gamma^2\right)B\:.
\end{equation}
\end{lemma}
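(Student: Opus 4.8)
The plan is to pass from the average information on $N$ encoded in (\ref{eqN}) to the Laplace-type transform $f(\sigma)=\sigma\int_0^\infty N(u)/(e^{\sigma u}-1)\,du$ by a single integration by parts against $G(u):=\int_0^u N(t)/t\,dt$, and then to read off the resulting constants from the Mellin transform of the kernel. Writing $h(v)=v/(e^v-1)$ and using $dG(u)=N(u)u^{-1}\,du$, one has $f(\sigma)=\int_0^\infty h(\sigma u)\,dG(u)$. Since $h(\sigma u)\to 0$ exponentially as $u\to\infty$ while $G(u)=O(u^\alpha)$, and since $G$ vanishes near $0$ (because $N$ does, and $h(0)=1$), the boundary terms drop; the Riemann--Stieltjes integration by parts (legitimate because $G$ is continuous and of locally bounded variation and $h$ is smooth) together with the substitution $v=\sigma u$ then yields the clean representation
\[
f(\sigma)=-\int_0^\infty h'(v)\,G(v/\sigma)\,dv.
\]
First I would record this identity.

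Next I would insert the asymptotic shape (\ref{eqN}) for $G$. Setting $G_0(u)=\frac{A}{\alpha}u^\alpha+B\log^2 u+C\log u+D$ and expanding $\log(v/\sigma)=\log v-\log\sigma$, the model contribution $-\int_0^\infty h'(v)G_0(v/\sigma)\,dv$ reduces to the moments $\Phi(s):=-\int_0^\infty h'(v)v^s\,dv$ at $s=\alpha$ together with the $\log$-moments $\Phi^{(k)}(0)=-\int_0^\infty h'(v)\log^k v\,dv$ for $k=0,1,2$. The key computation is that $\Phi$ is analytic for $\operatorname{Re}s>-1$ and equals $s\,\Gamma(s+1)\zeta(s+1)$ there, which follows from the Mellin identity $\int_0^\infty v^s/(e^v-1)\,dv=\Gamma(s+1)\zeta(s+1)$ after integrating $-\int h'v^s$ by parts. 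Then $\Phi(\alpha)=\alpha\,\Gamma(\alpha+1)\zeta(\alpha+1)$ produces the leading term $A\Gamma(\alpha+1)\zeta(\alpha+1)\sigma^{-\alpha}$, while expanding $s\,\Gamma(s+1)\zeta(s+1)$ about $s=0$ with the Laurent series $\zeta(1+s)=s^{-1}+\gamma-\gamma_1 s+\cdots$ and $\Gamma(1+s)=1-\gamma s+(\tfrac{\gamma^2}{2}+\tfrac{\pi^2}{12})s^2+\cdots$ gives $\Phi(0)=1$, $\Phi'(0)=0$, and $\Phi''(0)=\tfrac{\pi^2}{6}-2\gamma_1-\gamma^2$. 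Feeding these back reproduces $B\log^2\sigma-C\log\sigma+D'$ with exactly the constant $D'$ of (\ref{Matulaeq3.3}); I would display the bookkeeping of the $\log v$ and $\log\sigma$ cross terms, observing that $\Phi'(0)=0$ is precisely what kills the spurious $\log\sigma$ contribution arising from $B\log^2(v/\sigma)$.

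Finally I would justify that replacing $G$ by $G_0$ costs only $o(1)$, which is the one genuinely delicate point. Writing $\eta=G-G_0$, I would split $-\int_0^\infty h'(v)\eta(v/\sigma)\,dv$ at $v=\sigma$. On $v\ge\sigma$ we have $v/\sigma\ge 1$ and $\eta=o(1)$, so a standard Tauberian splitting — choose $U$ with $|\eta(u)|<\epsilon$ for $u>U$, bound the tail by $\epsilon\int_0^\infty|h'|$, and bound the piece $1\le u\le U$ by $\sup|h'|\cdot(U-1)\sigma\cdot\sup_{[1,U]}|\eta|=O(\sigma)$ — shows this part tends to $0$. On $0<v<\sigma$ both $G$ and $G_0$ must be treated: $G$ is bounded there, so its integral is $O(\sigma)$, while the model terms are controlled after the rescaling $w=v/\sigma$ by the convergence of $\int_0^1\log^2 w\,dw$ and by $\int_0^\sigma h'(v)v^\alpha\,dv=O(\sigma^{\alpha+1})$, all $o(1)$ once divided by the relevant powers of $\sigma$. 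The hard part will be exactly this uniform control near the origin, where the logarithmic singularity of the model $G_0$ clashes with the boundedness of the true $G$; the rescaling $v=\sigma u$ and the boundedness of $h'$ in a neighbourhood of $0$ are what make the estimates go through, and combining the three parts yields the claimed expansion with remainder $o(1)$.
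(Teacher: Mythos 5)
Your argument is correct, and it follows a genuinely different route from the one in the paper. The paper works distributionally: it differentiates \eqref{eqN} in $\mathcal{S}'$, obtaining an asymptotic expansion of $N(\lambda u)/u$ in terms of $\delta$ and Hadamard finite-part regularizations of $H(u)/u$ and $H(u)\log u/u$, tests it against $\psi(u)=u/(e^u-1)$, and then evaluates the resulting finite-part constants $K=\operatorname{F.p.}\int_0^\infty (e^u-1)^{-1}du$ and $K'=\operatorname{F.p.}\int_0^\infty (e^u-1)^{-1}\log u\,du$ from the Laurent expansion of $\Gamma(s+1)\zeta(s+1)$ at $s=0$. You instead integrate by parts against $G(u)=\int_0^u N(t)t^{-1}dt$ to get $f(\sigma)=-\int_0^\infty h'(v)G(v/\sigma)\,dv$ with $h(v)=v/(e^v-1)$, and read everything off the entire-at-$s=0$ moment function $\Phi(s)=-\int_0^\infty h'(v)v^s\,dv=s\Gamma(s+1)\zeta(s+1)$; because the kernel $-h'$ is integrable near the origin, no regularization is ever needed, and your $\Phi(0)=1$, $\Phi'(0)=0$, $\Phi''(0)=\tfrac{\pi^2}{6}-2\gamma_1-\gamma^2$ are exactly the paper's $-K$ and $-2K'$ in disguise (both computations rest on the same Laurent expansion). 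The paper itself remarks that a classical proof in the style of Korevaar's book should be possible; yours is essentially that proof. What your version buys is elementariness and a transparent error analysis (the $o(1)$ remainder is handled by a routine splitting at $v=\sigma$, with the logarithmic singularity of the model term near $u=0$ absorbed by the rescaling $v=\sigma u$ and the boundedness of $h'$); what the distributional version buys is brevity and a mechanism that generalizes immediately to longer asymptotic expansions in \eqref{eqN}. All the individual steps you outline check out: the boundary terms in the integration by parts do vanish, $\Phi$ is analytic for $\mathrm{Re}\,s>-1$ and agrees with $s\Gamma(s+1)\zeta(s+1)$ by continuation, $\Phi'(0)=0$ correctly kills the cross term from $B\log^2(v/\sigma)$, and the three-part remainder estimate is sound.
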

\begin{proof}
We employ standard Schwartz distribution calculus in our manipulations. It might also be possible to give a classical proof along the lines of that of \cite[Thm.\ IV.23.1]{Korevaarbook}. For Schwartz distributions, we follow the notation exactly as in \cite[Chap. 2]{estrada-kanwal2002}. Taking distributional derivative in (\ref{eqN}), we obtain
\begin{align*}
\frac{N(\lambda u)}{u}=&A\lambda^{\alpha}u_{+}^{\alpha-1}+(B\log^{2}\lambda+C\log\lambda+D)\delta(u)+2B\operatorname*{Pf}\left(\frac{H(u)\log u}{u}\right)\\
& +(2B\log\lambda+C)\operatorname*{Pf}\left(\frac{H(u)}{u}\right)+o(1)\:, \ \ \ \lambda\to\infty\:,
\end{align*}
distributionally in the space of tempered distributions $\mathcal{S}'$ (cf. \cite[Sec. 3.9]{estrada-kanwal2002}, \cite[Sec. 2.5]{p-s-v}), where $\delta$ stands for the Dirac delta distribution, $H$ is the Heaviside function, and $\operatorname*{Pf}$ denotes regularization via Hadamard finite part \cite[Sec. 2.4]{estrada-kanwal2002}. Testing this asymptotic expansion at the test function $\psi(u)=u/(e^u-1)$, setting $\sigma=1/\lambda$, and taking into account the well-known formula
\begin{equation}
\label{eqRiemann}
\Gamma(s+1)\zeta(s+1)=\int_{0}^{\infty}\frac{u^{s}}{e^{u}-1}\: du\: , \ \ \ \Re e\:s>0\:,
\end{equation}
we obtain
$$
\sigma\int_0^\infty \frac{N(u)}{e^{\sigma u}-1}\,du=\frac{A\Gamma(\alpha+1)\zeta(\alpha+1)}{\sigma^\alpha} + B\log^2 \sigma - \left(C+2BK\right)\log \sigma + D' + o(1)\: ,
$$
where $D'=D+2BK'+CK$, and the constants $K$ and $K'$ are given by the Hadamard finite part at $0$ of the integrals
$$
K=\operatorname*{F.p.}\int_{0}^{\infty}\frac{du}{e^{u}-1} \quad \mbox{and} \quad K'=\operatorname*{F.p.}\int_{0}^{\infty}\frac{\log u}{e^{u}-1}\:du\:.
$$
Hence, it remains to evaluate these two constants. We will do so by inspecting the Laurent expansion at $s=0$ of the analytic continuation of (\ref{eqRiemann}). In fact, the classical procedure of Marcel Riesz \cite{estrada-kanwal2002,gelfand-shilovVol1} yields the analytic continuation of (\ref{eqRiemann}) to $\mathbb{C}\setminus\left\{0,-1,-2,\dots\right\}$ as the finite part integral of the right hand side. By employing the Gelfand-Shilov Laurent expansion at $s=0$ for the distribution $u_{+}^{s-1}$ \cite[p. 87]{gelfand-shilovVol1}, we conclude
$$
\operatorname*{F.p.}\int_{0}^{\infty}\frac{u^{s}}{e^{u}-1}\: du= \frac{1}{s}+\sum_{n=0}^{\infty}\frac{s^{n}}{n!}\operatorname*{F.p.}\int_{0}^{\infty}\frac{\log^{n}u}{e^{u}-1}\:du\,, \quad 0<|s|<1\:.
$$  
On the other hand, since
$$\zeta(s+1)=\frac{1}{s}+\gamma-\gamma_{1} s+\cdots\quad \mbox{and} \quad \Gamma(s+1)=1-\gamma s+\left(\frac{\gamma^{2}}{2}+\frac{\pi^{2}}{12}\right)s^{2}+\cdots\:,$$
we have
$$
\Gamma(s+1)\zeta(s+1)=\frac{1}{s} +\left(\frac{\pi^{2}}{12}- \gamma_{1}-\frac{\gamma^{2}}{2}\right)s+ \cdots\:, \quad 0<|s|<1\:,
$$
and therefore $K=0$ and $K'=\pi^{2}/12- \gamma_{1}-\gamma^{2}/2$.
\end{proof}

\begin{proof}[Proof of Theorem \ref{main-partition-thm}]
We can now apply Ingham's Tauberian theorem \cite[Thm. $1'$]{Ingham} to the generating function $F(s)$
with
\[\varphi(s):= \frac{A\Gamma(\alpha+1)\zeta(\alpha+1)}{s^\alpha} \quad\text{and}\quad \chi(s):= e^{D'} s^{-C}e^{B\log^{2}s}\:.\]
Indeed, in view of Lemma \ref{Ingham-extl1},
$$
F(\sigma)\sim \chi(\sigma)e^{\varphi(\sigma)}, \quad \sigma\to0^{+}\:,
$$
and the quoted theorem of Ingham immediately implies that 
$$
P(u)\sim \frac{\chi(\sigma(u))\exp\left(\varphi(\sigma(u))+u\sigma(u)\right)}{\sqrt{2\pi\sigma^{2}(u)\varphi''(\sigma(u))}}, \quad u\to\infty\:,
$$
where $\sigma(u)$ is the inverse function of $-\varphi'$, i.e., $\sigma(u)=M u^{-\frac1{\alpha+1}}$ with $M=(A\alpha\Gamma(\alpha+1)\zeta(\alpha+1))^{\frac1{\alpha+1}}$. Setting $\beta=\alpha/(\alpha+1)$, we have
\begin{align*}
e^{\varphi(\sigma(u))+ u\sigma(u)} &= e^{\frac{Mu^\beta}\beta}, \\
\frac{\chi(\sigma(u))}{\sigma(u)\sqrt{2\pi \varphi''(\sigma(u))}} &= \left(\frac{1-\beta}{2\pi}\right)^{\frac12} e^{D'} M^{-(C+\frac12)} u^{C-\beta C-\frac\beta2}e^{B\log^2(u^{1-\beta}/M)}\,.
\end{align*}
whence $(\ref{Matulaeq3.2})$ follows.
\end{proof}

In the sequel, we will only use the case $\alpha=1$ of Theorem \ref{main-partition-thm}, which we state in the next corollary for the sake of convenience.
\begin{corollary}\label{cor-partition-thm}
Suppose that
\[\int_0^u \frac{N(t)}t\,dt = Au + B\log^2 u + C\log u + D + o(1)\:.\]
Then
\[P(u) \sim \frac{e^{D'+B\log^2\left(\pi\sqrt{\frac A6}\right)}}{2\sqrt \pi} {\left(\pi\sqrt{\frac A6}\right)}^{-(C+\frac12)} u^{\frac C2 - \frac14 -B\log\left(\pi\sqrt{\frac A6}\right)}\exp\left(\pi\sqrt{\frac{2Au}3}+ \frac B4 \log^2 u\right)\:,\]
where $D'$ is given by (\ref{Matulaeq3.3}).
\end{corollary}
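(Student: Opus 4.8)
The plan is to obtain the corollary as the direct specialization of Theorem \ref{main-partition-thm} to the exponent $\alpha=1$, followed by an algebraic simplification of the constants. First I would observe that setting $\alpha=1$ in the hypothesis (\ref{eqN}) turns $\frac{A}{\alpha}u^\alpha$ into $Au$, which is exactly the assumption stated in the corollary, so Theorem \ref{main-partition-thm} applies verbatim. The remaining work is purely computational: evaluate the auxiliary quantities $\beta$ and $M$ at $\alpha=1$ and plug them into (\ref{Matulaeq3.2}).

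Next I would compute the constants. From $\beta=\alpha/(\alpha+1)$ I get $\beta=\tfrac12$, and from $M=(A\alpha\Gamma(\alpha+1)\zeta(\alpha+1))^{1/(\alpha+1)}$, using $\Gamma(2)=1$ and $\zeta(2)=\pi^2/6$, I obtain $M=(A\pi^2/6)^{1/2}=\pi\sqrt{A/6}$. I would then substitute $\beta=\tfrac12$ into each factor of (\ref{Matulaeq3.2}): the prefactor becomes $\left(\frac{1-\beta}{2\pi}\right)^{1/2}=\left(\frac{1}{4\pi}\right)^{1/2}=\frac{1}{2\sqrt\pi}$; the leading exponential becomes $\frac{Mu^\beta}{\beta}=2\pi\sqrt{A/6}\,\sqrt u=\pi\sqrt{2Au/3}$; and the power of $u$ becomes $u^{C-\beta C-\beta/2}=u^{C/2-1/4}$.

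The one step that requires genuine attention is the term $B\log^2\!\left(u^{1-\beta}/M\right)=B\log^2\!\left(u^{1/2}/M\right)$, which must be split into a constant, a power of $u$, and a surviving quadratic-log piece. Writing $\log(u^{1/2}/M)=\tfrac12\log u-\log M$ and squaring gives
\[
B\log^2\!\left(\frac{u^{1/2}}{M}\right)=\frac B4\log^2 u-B(\log M)\log u+B\log^2 M\:.
\]
Here the cross term contributes an extra factor $u^{-B\log M}$ that merges with $u^{C/2-1/4}$, the constant term $B\log^2 M$ merges with $e^{D'}$, and $\frac B4\log^2 u$ remains inside the exponential alongside $\pi\sqrt{2Au/3}$. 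Collecting all factors with $M=\pi\sqrt{A/6}$, and recalling that $D'$ is still given by (\ref{Matulaeq3.3}), yields precisely the asserted formula.

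I expect no real obstacle, since this is a routine corollary of an already-proved theorem; the only place an error could creep in is the bookkeeping in the last paragraph, where one must correctly track which pieces of the expanded $\log^2$ term become constants, which become powers of $u$, and which stay in the exponential. Carrying out this reorganization carefully completes the proof.
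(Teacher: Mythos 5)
Your proposal is correct and matches the paper's approach: the corollary is exactly the specialization $\alpha=1$ of Theorem \ref{main-partition-thm}, and the paper states it without spelling out the computation you carry out. Your constant evaluations ($\beta=\tfrac12$, $M=\pi\sqrt{A/6}$) and the three-way split of $B\log^2(u^{1/2}/M)$ into the constant, the extra power of $u$, and the surviving $\tfrac B4\log^2u$ term all check out against the stated formula.
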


\section{The original problem}
\label{proof}
We now proceed to give a proof of Theorem \ref{Matulath1}. We translate our original problem into an additive partition problem. Consider $\lambda_k = \log p_{m^k}$. Then, with the notation of the preceding section,
\[1+M_{2,m}(e^u)  
= \underset{r\in\Lambda}{\sum_{r\le u}} p(r) = P(u)\: .\]
Of course, here $p(r)=1$ for each $r\in \Lambda=\left\{\sum_{k=0}^{l}n_{k}\log p_{m^{k}}:\:n_{k}\in\mathbb{N} \right\}$.

Thus, we are interested in the average asymptotics of the counting function 
$$N(u)=\sum_{\log p_{m^k}\le u} 1\, .$$ 
Observe that
\begin{equation}\label{integral-N-over-t}
\int_0^u \frac{N(t)}t\,dt
= N(u)\log u - \sum_{\log p_{m^k}\le u}\log\log p_{m^k}\:.
\end{equation}
We first need to estimate $\log\log p_{m^k}$.

\begin{lemma}
\label{Matulal2}
\begin{equation}\label{loglogplk}
\log\log p_{m^k} = \log k + \log\log m + \frac{\log k}{k \log m} + \frac{\log\log m}{k \log m} + O\left(\frac{\log^{2} k}{k^2}\right).\end{equation}
\end{lemma}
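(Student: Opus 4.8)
The plan is to asymptotically expand $\log\log p_{m^k}$ by starting from the prime number theorem in the sharp form $p_n = n\log n + n\log\log n - n + o(n)$, or more conveniently from the estimate $\log p_n = \log n + \log\log n + O(\log\log n/\log n)$. Setting $n = m^k$, we have $\log n = k\log m$, so $\log\log n = \log k + \log\log m$. The key first step is therefore to write
\begin{equation*}
\log p_{m^k} = k\log m + \log(k\log m) + O\!\left(\frac{\log(k\log m)}{k\log m}\right) = k\log m\left(1 + \frac{\log k + \log\log m}{k\log m} + O\!\left(\frac{\log k}{k^2\log m}\right)\right).
\end{equation*}

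Next I would take logarithms once more. Writing $L = \log k + \log\log m$ for brevity, the bracketed factor is $1 + L/(k\log m) + O(\cdots)$, so
\begin{equation*}
\log\log p_{m^k} = \log(k\log m) + \log\!\left(1 + \frac{L}{k\log m} + O\!\left(\frac{\log k}{k^2 \log m}\right)\right) = \log k + \log\log m + \log\!\left(1 + \frac{L}{k\log m} + \cdots\right).
\end{equation*}
Now I expand the outer logarithm using $\log(1+x) = x - x^2/2 + O(x^3)$ with $x = L/(k\log m) + O(\log k/(k^2\log m))$. The linear term contributes $\frac{\log k + \log\log m}{k\log m}$, which matches the two middle terms in \eqref{loglogplk} exactly. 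The quadratic and higher terms, together with the error already present inside $x$, must all be absorbed into $O(\log^2 k/k^2)$: indeed $x^2 = O(L^2/(k\log m)^2) = O(\log^2 k/k^2)$, and the $O(\log k/(k^2\log m))$ piece of $x$ is $O(\log k/k^2)$, both dominated by $O(\log^2 k/k^2)$ for large $k$.

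The main technical obstacle, and the step requiring the most care, is securing the error term in the underlying prime estimate with the right uniformity and at the right order; I must ensure the remainder $O(\log(k\log m)/(k\log m))$ in the expansion of $\log p_{m^k}$ is genuinely of that size, since it feeds directly into the final $O(\log^2 k/k^2)$ bound after the second logarithm. A clean way to handle this is to carry the expansion of $\log p_n$ with an explicit two-term main part and a controlled $O$-remainder, then verify that each contribution—the quadratic term $-x^2/2$, the cross terms, and the propagated error—is bounded by a constant times $\log^2 k/k^2$. Since $m$ is fixed, all constants may depend on $m$, which simplifies the uniformity bookkeeping considerably. The bound is not sharp in the implied constant but suffices, because Lemma \ref{Matulal2} will only be summed against $1/k$-type weights where an $O(\log^2 k/k^2)$ tail converges, guaranteeing that the series defining $C_{2,m}$ in Theorem \ref{Matulath1} is absolutely convergent.
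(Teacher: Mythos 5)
Your proposal is correct and follows essentially the same route as the paper: both start from the two-term Cipolla/prime-number-theorem estimate $\log p_n = \log n + \log\log n + O(\log\log n/\log n)$ and then expand the outer logarithm, with the only cosmetic difference being that you substitute $n=m^k$ before the second logarithm while the paper substitutes afterwards. The error bookkeeping ($x^2 = O(\log^2 k/k^2)$ and the propagated remainder being dominated by the same bound, with constants allowed to depend on the fixed $m$) matches the paper's computation.
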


\begin{proof}
Using the prime number theorem, Cipolla \cite{Cipolla} found in 1902 an asymptotic formula for $p_n$. Employing just two terms in the expansion, we have
\[p_n = n\log n + O(n\log\log n).\]
This leads to
\[\log p_n = \log n + \log\log n + O\left(\frac{\log\log n}{\log n}\right)\]
and
\[\log\log p_n = \log\log n + \frac{\log\log n}{\log n} + O\left(\frac{(\log\log n)^2}{\log^2 n}\right)\:.\]
Thus, for $n=m^k$, we obtain the required formula.
\end{proof}
Lemma \ref{Matulal2} immediately yields:
\begin{corollary}
\begin{equation}\label{C2l}
C_{2,m}=\sum_{k=1}^\infty \left(\log(\log p_{m^k}) - \log k - \log\log m - \frac{\log k}{k \log m} - \frac{\log\log m}{k \log m}\right)
\end{equation}
converges.
\end{corollary}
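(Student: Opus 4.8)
The plan is to recognize that each summand in the series defining $C_{2,m}$ is precisely the remainder term furnished by Lemma~\ref{Matulal2}. Writing
\[
a_k := \log(\log p_{m^k}) - \log k - \log\log m - \frac{\log k}{k \log m} - \frac{\log\log m}{k \log m},
\]
the expansion \eqref{loglogplk} asserts exactly that $a_k = O\!\left(\log^2 k / k^2\right)$ as $k\to\infty$, because the four subtracted quantities are the first four terms of that expansion. Hence convergence of $\sum_{k=1}^\infty a_k$ reduces to summability of the comparison series $\sum_{k=1}^\infty \frac{\log^2 k}{k^2}$, together with the mere finiteness of the finitely many initial terms.

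First I would check that each term is well defined: for $m>1$ and $k\ge 1$ one has $m^k\ge 2$, so $p_{m^k}\ge p_2 = 3$ and $\log p_{m^k} > 1$; thus $\log\log p_{m^k}$ is a finite real number and every $a_k$ makes sense. Next I would settle convergence of the comparison series by a routine estimate: since $\log^2 k = o(k^{1/2})$, there is a constant $C$ with $\log^2 k / k^2 \le C\, k^{-3/2}$ for all large $k$, and $\sum_k k^{-3/2}$ converges as a $p$-series with exponent $3/2 > 1$. (Equivalently, the integral test applied to $\int_2^\infty t^{-2}\log^2 t\, dt < \infty$ closes the argument.) By comparison $\sum_{k} |a_k| < \infty$, so the series defining $C_{2,m}$ converges, in fact absolutely.

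I do not anticipate any genuine obstacle here. The entire number-theoretic content — controlling $\log\log p_{m^k}$ down to an $O(\log^2 k / k^2)$ error via Cipolla's expansion of $p_n$ — has already been absorbed into Lemma~\ref{Matulal2}, and what remains is a standard convergence estimate. The only point demanding care is that the four explicit main terms subtracted in $a_k$ must coincide exactly with the leading four terms of \eqref{loglogplk}; this is precisely why the expansion in Lemma~\ref{Matulal2} was pushed to that order of accuracy, so that the residual is genuinely $O(\log^2 k/k^2)$ and hence summable.
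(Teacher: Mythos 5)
Your proof is correct and takes the same route as the paper, which simply observes that the corollary follows immediately from Lemma~\ref{Matulal2}: the general term of the series is the $O(\log^2 k/k^2)$ remainder in \eqref{loglogplk}, and that bound is summable. You have merely written out the comparison argument the paper leaves implicit.
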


Next,

\begin{lemma}\label{avg-asymptotics-of-N}
\begin{equation}\label{Matulaeq4.4}
\int_0^u \frac{N(t)}{t}\,dt 
=\frac{u}{\log m} - \frac{\log^2 u}{2\log m} + \frac{1}{2}\log u  +D_m + o(1)\: ,
\end{equation}
where 
$$D_m = \frac{(\log\log m)^2}{2\log m}  + \log\left(\frac{1}{\log 2}\sqrt{\frac{\log m}{2\pi}}\right)- C_{2,m} - \frac{\gamma_1}{\log m} - \frac{\log\log m}{\log m}\gamma\:.$$
\end{lemma}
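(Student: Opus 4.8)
The plan is to start from the identity (\ref{integral-N-over-t}), which already expresses the integral as $N(u)\log u - \sum_{\log p_{m^k}\le u}\log\log p_{m^k}$, and to reduce everything to a single asymptotic variable. Writing $K=K(u)$ for the largest index with $\log p_{m^K}\le u$, so that $N(u)=K+1$, the right-hand side becomes $(K+1)\log u - S_K$ with $S_K:=\sum_{k=0}^K\log\log p_{m^k}$. Thus the proof splits into two tasks: (i) expand $S_K$ as a function of the integer $K$, and (ii) relate $K$ and $\log K$ back to the continuous variable $u$ with enough precision.

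For task (i), I would isolate the $k=0$ term, which contributes $\log\log p_1=\log\log 2$, and apply Lemma \ref{Matulal2} to the rest. Summing the main terms of (\ref{loglogplk}): $\sum_{k=1}^K\log k$ is handled by Stirling's formula, $\sum_{k=1}^K \frac{\log k}{k}$ by the very definition of the Stieltjes constant $\gamma_1$, and $\sum_{k=1}^K\frac1k$ by the harmonic asymptotics $\log K+\gamma+o(1)$. The sum of the error terms in (\ref{loglogplk}) converges to $C_{2,m}$ by (\ref{C2l}), the remaining tail being $O(\log^2 K/K)=o(1)$. This yields a closed expansion of $S_K$ in $K$, $\log K$, and $\log^2 K$, with explicit constants involving $\gamma$, $\gamma_1$, $\log 2$, $\log\log m$, and $C_{2,m}$.

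For task (ii), the delicate point is that $u$ varies continuously while $K$ is an integer with $u-\log p_{m^K}=O(1)$. I would first extract the linear term by writing $\log u=\log\log p_{m^K}+\frac{u-\log p_{m^K}}{\log p_{m^K}}+O(u^{-2})$; since $K+1\sim u/\log m$ and $\log p_{m^K}\sim u$, multiplying by $K+1$ turns the middle term into $\frac{u-\log p_{m^K}}{\log m}+o(1)$. Splitting this as $\frac{u}{\log m}-\frac{\log p_{m^K}}{\log m}$ peels off the leading term $\frac{u}{\log m}$ and leaves a quantity depending on $K$ alone, the $O(1)$ positional fluctuation having cancelled. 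I would then convert the surviving $K$-function to $u$ via the inversion $\log K=\log u-\log\log m+O(\log u/u)$, read off from the expansion of $\log p_{m^K}$. The precision $O(\log u/u)=o(1/\log u)$ is exactly what is needed so that the $\log^2 K$ term contributes no spurious constant beyond $o(1)$; in this substitution the cross term $\frac{\log u\,\log\log m}{\log m}$ arising from $-\log^2 K/(2\log m)$ cancels the $-\frac{\log\log m}{\log m}\log K$ contribution, leaving the coefficients $-\frac{1}{2\log m}$ of $\log^2 u$ and $\tfrac12$ of $\log u$. Collecting the remaining constants assembles $D_m$, after rewriting $\tfrac12\log\log m-\log\log 2-\tfrac12\log(2\pi)$ as $\log\big(\tfrac{1}{\log 2}\sqrt{\tfrac{\log m}{2\pi}}\big)$.

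The main obstacle is precisely this discrete-to-continuous passage. One must check that the two potentially dangerous $O(1)$ effects, namely the positional gap $u-\log p_{m^K}$ inside the linear term and the second-order error in inverting $K\mapsto u$ inside the quadratic-in-$\log$ term, both wash out to $o(1)$; everything else is routine summation. A clean way to organize the bookkeeping is to verify the expansion first at $u=\log p_{m^K}$ and then observe that both sides vary by only $O(\log u/u)=o(1)$ across each interval $[\log p_{m^K},\log p_{m^{K+1}})$, so that establishing it along the jump points suffices.
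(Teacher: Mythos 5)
Your main argument follows essentially the same route as the paper: both start from (\ref{integral-N-over-t}), expand the sum via Lemma \ref{Matulal2}, Stirling's formula and the defining series for $\gamma$ and $\gamma_1$, and then perform the same discrete-to-continuous conversion (the paper parametrizes by $y=\log(\pi(e^u))/\log m$ via the prime number theorem where you parametrize by the jump index $K$; the cancellations are identical and your constants come out right). One caveat: your closing remark that ``both sides vary by only $O(\log u/u)=o(1)$ across each interval $[\log p_{m^K},\log p_{m^{K+1}})$'' is false --- each side varies by roughly $(u-\log p_{m^K})/\log m$, which is of order $1$ over an interval of length $\approx\log m$, owing to the term $u/\log m$; only the \emph{difference} of the two sides varies by $o(1)$ across the interval. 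This does not damage your main computation, which correctly keeps $(u-\log p_{m^K})/\log m$ explicit for general $u$, but the proposed shortcut of verifying the formula only at the jump points would have to be restated in terms of that difference.
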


\begin{proof}
We first notice that
\[
p_{m^k}\le e^u 
\mbox{ if and only if } k\le y:= \frac{\log(\pi(e^u))}{\log m}\:,
\]
where $\pi(x)$ is the distribution of the prime numbers. By the prime number theorem,
\[y 
 = \frac u {\log m} - \frac{\log u}{\log m} + O\left(\frac 1 u\right)\:.
\]
Thus $N(u)=\left\lfloor  y \right\rfloor+1$ and, combining equations\ \eqref{integral-N-over-t}, \eqref{loglogplk} and \eqref{C2l},
\begin{align*}
\int_0^u \frac{N(t)}{t}\,dt 
&= (\left\lfloor y \right\rfloor+1) \log u -\log\log 2-\sum_{k=1}^{\left\lfloor y \right\rfloor} \log(\log p_{m^k})
\\
&=(\left\lfloor  y \right\rfloor +1)\log u - \log\log2- C_{2,m} - \log(\left\lfloor  y \right\rfloor !) - (\log\log m)\left\lfloor  y \right\rfloor 
\\
&
\ \ \ - \frac{1}{\log m}\sum_{k=1}^{\left\lfloor  y \right\rfloor} \frac{\log k}k - \frac{\log\log m}{\log m}\sum_{k=1}^{\left\lfloor  y \right\rfloor} \frac{1}k + o(1)\:.
\end{align*}

Using Stirling's formula and the defining formulas for $\gamma$ and $\gamma_1$
\[
\log(n!) = n\log n - n + \frac{1}{2}\log n + \log(\sqrt{2\pi}) + o(1)\:,
\]
\[
\gamma = \sum_{k=1}^n \frac{1}{k} - \log n + o(1)
\quad\text{and}\quad
\gamma_1 = \sum_{k=1}^n \frac{\log k}k - \frac{\log^2 n}2 + o(1)\:,
\]
we have
\begin{align*}
\int_0^u \frac{N(t)}{t}\,dt 
&=\left\lfloor  y \right\rfloor \log \left(\frac{u}{\left\lfloor  y \right\rfloor \log m}\right) +\log u+ \left\lfloor  y \right\rfloor - \frac{\log^2\left\lfloor  y \right\rfloor}{2\log m} - \left(\frac{1}{2} + \frac{\log\log m}{\log m}\right)\log\left\lfloor  y \right\rfloor\\
& \ \ \ -\log\log 2- C_{2,m} - \log(\sqrt{2\pi}) - \frac{\gamma_1}{\log m} - \frac{\log\log m}{\log m}\gamma + o(1)\:.
\end{align*}
Since
\[
\left\lfloor  y \right\rfloor \log \left(\frac{u}{\left\lfloor  y \right\rfloor\log m}\right)
=\left\lfloor  y \right\rfloor \log \left(1 + \frac{u-\left\lfloor  y \right\rfloor \log m}{\left\lfloor  y \right\rfloor \log m}\right)
= \frac{u-\left\lfloor  y \right\rfloor \log m}{\log m} + O\left(\frac{\log^{2}u} {u}\right)
\]
and
\[
\log\left\lfloor  y \right\rfloor= \log (y+ O(1)) = \log y + O\left(\frac1y\right) = \log u - \log\log m+ O\left(\frac{\log u}u\right),
\]
we obtain (\ref{Matulaeq4.4}), as required.
\end{proof}

The asymptotic formula (\ref{Matulaeq6}) follows by combining Lemma \ref{avg-asymptotics-of-N} and Corollary \ref{cor-partition-thm} after a straightforward calculation. The proof of Theorem \ref{Matulath1} is complete.

\end{document}